\providecommand{\U}[1]{\protect \rule{.1in}{.1in}}
\newtheorem{theorem}{Theorem}
\newtheorem{corollary}[theorem]{Corollary}
\newtheorem{example}[theorem]{Example}
\newtheorem{lemma}[theorem]{Lemma}
\newtheorem{remark}[theorem]{Remark}
\begin{document}

\title{Characterizations of Strongly Quasiconvex Functions}
\author{Nicolas Hadjisavvas\thanks{Department of Product and Systems Design
Engineering, University of the Aegean, Hermoupolis, Syros, Greece. E-mail:
nhad@aegean.gr, ORCID-ID: 0000-0002-9895-8190}
\and Felipe Lara\thanks{Instituto de Alta investigaci\'on (IAI), Universidad de
Tarapac\'a, Arica, Chile. E-mail: felipelaraobreque@gmail.com;
flarao@academicos.uta.cl. Web: felipelara.cl, ORCID-ID: 0000-0002-9965-0921}}
\maketitle

\begin{abstract}
\noindent
%{\bf Abstract}
We provide new necessary and sufficient conditons for ensuring strong
quasiconvexity in the nonsmooth case and, as a consequence, we provide a
proof for the differentiable case. Furthermore, we improve the quadratic
growth property for strongly quasiconvex functions.

\medskip

\noindent{\small \emph{Keywords}: Generalized convexity; Strong
quasiconvexity}

\medskip

\noindent \textbf{Mathematics Subject Classification:} 90C26; 90C30.

\end{abstract}

%\today

%\centerline{\today}

\section{Introduction}

Let $X$ be a normed space, $C\subseteq X$ be an open convex set and
$h:C\rightarrow \mathbb{R}$ be a function. The famous Arrow-Enthoven
characterization \cite{AE} says that: A diffe\-ren\-tia\-ble function
$h:C\rightarrow \mathbb{R}$ is quasiconvex if and only if for every $x,y\in C$,
we have
\begin{equation}
h(x)\leq h(y)~\Longrightarrow~\langle \nabla h(y),y-x\rangle \geq0.
\label{char:AE}%
\end{equation}

The previous characterization is very useful in generalized convexity and
monotonicity theory, continuous optimization and variational inequalities as
well as in its applications in economics and engineering among others (see
\cite{ADSZ,CamMar,HKS} for a comprehensive presentation).

A version of the previous result, adapted to strongly quasiconvex functions,
was given in \cite[Theorems 1 and 6]{VNC-2} in two parts. The first part is
the fo\-llo\-wing necessary condition: Let $h$ be a differentiable function. If
$h$ is strongly quasiconvex with modulus $\gamma>0$, then for every $x,y\in
C$, we have
\begin{equation}
h(x)\leq h(y)~\Longrightarrow~\langle \nabla h(y),y-x\rangle \geq \frac{\gamma
}{2}\lVert y-x\rVert^{2}. \label{intro}%
\end{equation}
The second part is a sufficient condition: If \eqref{intro} holds for all
$x,y\in C$, then $h$ is strongly quasiconvex with modulus $\frac{\gamma}{2}$.

Condition \eqref{intro} has been used in the last years for accelerating the
convergence of gradient-type methods with momentum and for obtaining new
examples of strongly quasiconvex functions (see
 \cite{HLMV,J-1,Lara-9,LMV}). However, the only known proof that
\eqref{intro} is sufficient for strong quasiconvexity \cite[Theorem 6]{VNC-2}
is based on a lemma the proof of which is long and tricky, and in addition it
needs amend\-ments. To be more precise, the lemma says the following:

\begin{lemma} {\rm (\cite[Lemma in page 22]{VNC-2})}
Assume that $\kappa(t)$ is a nonnegative su\-mma\-ble function that is not
everywhere zero on $\left[  0,a\right]  $, $\kappa(0)=0$, and assume that
$g:\left[  0,a\right]  \rightarrow \mathbb{R}$ is an absolutely continuous
function such that $g(0)=0\geq g(a)$. If%
\begin{equation}
g^{\prime}(u)(u-v)\geq \kappa \left(  \left \vert u-v\right \vert \right)
\left \vert u-v\right \vert \label{r:gk}%
\end{equation}
for all $u,v\in \left[  0,a\right]  $ such that $g(u)\geq g(a)$, then for all
$\lambda \in \left]  0,a\right[  $ we have%
\[
g(\lambda a)\leq-\lambda(1-\lambda)\int_{0}^{a}\kappa(t)dt.
\]
\end{lemma}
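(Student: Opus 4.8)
\emph{Overall plan.} I would first read the hypothesis \eqref{r:gk} as ``$g(u)\geq g(v)$'' in place of ``$g(u)\geq g(a)$'': this is the form one obtains by specializing \eqref{intro} to the segment $[x,y]$ (there the premise $h(x)\leq h(y)$ becomes $g(v)\leq g(u)$), and it is the form under which the conclusion holds; the literal reading is degenerate, which is presumably part of why amendments are needed. From ``$g(u)\geq g(v)$, $u\neq v$'' we get $g'(u)(u-v)\geq\kappa(|u-v|)|u-v|\geq0$, so $g'(u)$ has the sign of $u-v$. Taking $t_{m}$ a minimizer of $g$ on $[0,a]$ and using $v=t_{m}$ forces $g$ to be non-increasing on $[0,t_{m}]$ and non-decreasing on $[t_{m},a]$. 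Since $g(0)=0\geq g(a)$ we then have $g\leq0$ on $[0,a]$ (which already gives the claim when $\int_{0}^{a}\kappa=0$), so from now on I would assume $\int_{0}^{a}\kappa>0$.

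\emph{The key pointwise estimate.} For $t\in[0,t_{m}]$ the set $\{v\in[0,a]:g(v)\leq g(t)\}$ is an interval $[t,r(t)]$, where $r(t)$ is the last point of the ascending branch still lying at level $\leq g(t)$; in particular $r(t)=a$ precisely when $g(t)\geq g(a)$. Feeding every $v\in(t,r(t)]$ into the hypothesis gives $g'(t)\leq-\kappa(v-t)$ for all such $v$, hence
\[
g'(t)\leq-K\!\bigl(r(t)-t\bigr),\qquad K(s):=\sup_{0<w\leq s}\kappa(w),
\]
with $K$ non-decreasing and $K\geq\kappa$ a.e.; symmetrically $g'(t)\geq K(t-\ell(t))$ on $[t_{m},a]$.

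\emph{Main case: $g(\lambda a)\geq g(a)$.} Then $\lambda a\leq t_{m}$, so $g(t)\geq g(\lambda a)\geq g(a)$ for every $t\in[0,\lambda a]$, hence $r(t)=a$ and $|g'(t)|\geq K(a-t)$ a.e. on $[0,\lambda a]$. Therefore
\[
-g(\lambda a)=\int_{0}^{\lambda a}|g'(t)|\,dt\;\geq\;\int_{0}^{\lambda a}K(a-t)\,dt\;=\;\int_{(1-\lambda)a}^{a}K(s)\,ds .
\]
Writing $p=(1-\lambda)a$ and using monotonicity of $K$ (so $\int_{p}^{a}K\geq\lambda a\,K(p)$ and $\int_{0}^{p}K\leq(1-\lambda)a\,K(p)$), a one-line computation gives $\int_{p}^{a}K\geq\lambda(1-\lambda)\int_{0}^{a}K$ — the inequality to be checked collapses to $\lambda\geq0$ — and since $K\geq\kappa$ this is exactly $-g(\lambda a)\geq\lambda(1-\lambda)\int_{0}^{a}\kappa$.

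\emph{Remaining case (the main obstacle): $g(\lambda a)<g(a)$.} When $g(a)<0$, let $t_{0}<t_{m}$ be the point of the descending branch with $g(t_{0})=g(a)$; then $[t_{0},a]$ is a valley with equal endpoint values, $\lambda a$ sits there at relative position $\hat\lambda=(\lambda a-t_{0})/(a-t_{0})\in(0,1)$, and $\hat g(s):=g(t_{0}+s)-g(a)$ satisfies the same hypotheses on $[0,a-t_{0}]$ with $\hat g$ vanishing at both ends. The main case applied at $t_{0}$ gives $-g(a)\geq\int_{a-t_{0}}^{a}K\geq\int_{a-t_{0}}^{a}\kappa$, and writing $-g(\lambda a)=-g(a)+\bigl(g(a)-g(\lambda a)\bigr)$ and invoking the sought bound for $\hat g$ reduces the claim to $\hat\lambda(1-\hat\lambda)\geq\lambda(1-\lambda)$. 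Here is the catch: $\hat\lambda\leq\lambda$ always, so this can fail for $\lambda\leq\tfrac12$; and when $g(a)=0$ the valley is all of $[0,a]$ and the reduction is vacuous, so one must instead exploit the reflection $t\mapsto a-t$ (which preserves the hypotheses when $g(a)=0$) and, more substantially, recover the missing portion of $\lambda(1-\lambda)\int_{0}^{a}\kappa$ from the ascending branch of $g$ as well, balancing that lower bound for $-g(\lambda a)$ against the one above. I expect this balancing — together with the technical care needed for flat stretches of $g$ and for $\kappa$ concentrated near a boundary point — to be the main work, and it is presumably the ``long and tricky'' part that needs the amendments.
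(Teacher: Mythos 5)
There is nothing in the paper to compare your argument against: the paper does not prove this lemma. It quotes it from Vladimirov--Nesterov--Chekanov precisely in order to point out that the only known proof is defective (the example that follows the lemma refutes the proof's opening claim that \eqref{r:gk} forces $g$ to be quasiconvex), and the paper's actual results (Theorem \ref{Th:Charact1} and Theorem \ref{theo:2}) are built to bypass the lemma entirely. Moreover, the literal statement is false, so no proof of it can exist; your instinct to amend the premise is necessary, not cosmetic. Concretely, take $a=1$, $\kappa(0)=0$, $\kappa\equiv1$ on $\left]0,1\right]$, and $g=-\phi$ with $\phi$ piecewise linear, symmetric about $t=\tfrac12$, $\phi(0)=\phi(1)=0$, $\phi(t)=t$ on $[0,\tfrac1{10}]$ and $\phi\equiv\tfrac1{100}$ on $[\tfrac15,\tfrac45]$. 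Then $g<0$ on $\left]0,1\right[$, so the premise $g(u)\geq g(1)=0$ holds only at $u\in\{0,1\}$, where $g^{\prime}(0)=-1$ and $g^{\prime}(1)=1$ make \eqref{r:gk} hold with equality; yet $g(\tfrac12)=-\tfrac1{100}>-\tfrac14=-\lambda(1-\lambda)\int_{0}^{1}\kappa$ at $\lambda=\tfrac12$. So if you replace the premise by $g(u)\geq g(v)$ you are proving a different (amended) lemma, and you must say so explicitly and then actually prove it.

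Even for the amended statement, what you have is not a proof: by your own account the case $g(\lambda a)<g(a)$ is unresolved, and your reduction to the valley $[t_{0},a]$ provably fails for $\lambda\leq\tfrac12$ because $\hat{\lambda}\leq\lambda$. That case is not a loose end; it is exactly the hard configuration (the analogue of $h(z)<h(x)$ in the converse part of Theorem \ref{Th:Charact1}), and the paper resolves it by a different device: introduce the mirror point $z^{\prime}$ on the opposite monotone branch with $h(z^{\prime})=h(z)$, apply the integrated descent inequality \eqref{no-integral1} once on each side of the minimizer, add, and use an elementary algebraic estimate to extract the product bound $\tfrac{\gamma}{4}\lVert z-x\rVert\,\lVert y-z\rVert$ --- at the cost of halving the modulus. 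Your main case is sound under the amended hypothesis (unimodality from $v=t_{m}$, the bound $g^{\prime}(t)\leq-K(a-t)$, and the monotonicity inequality $\int_{p}^{a}K\geq\lambda\int_{0}^{a}K$), but as submitted the argument establishes the conclusion only when $g(\lambda a)\geq g(a)$. If your aim is the application to strong quasiconvexity, the efficient route is the paper's: abandon the lemma, prove \eqref{no-integral} directly (via Saks' theorem for the Dini-derivative version), and handle the hard case with the $z^{\prime}$ reflection.
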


The proof starts by asserting that ``\eqref{r:gk} implies that $g$ is
quasiconvex". This is not correct, as the following example shows.

\begin{example}
Take $\alpha=4$, $\kappa(t)=0$ on $\left[  0,4\right[  $ and $\kappa(4)=1$,
$g(t)=-9+\left(  t-1\right)  ^{2}\left(  t-3\right)  ^{2}$. Note that $\kappa$
is nonnegative, summable, not everywhere zero and $\kappa(0)=0$, while $g$ is
absolutely continuous, and $g(0)=g(4)=0$. Let us check whether (\ref{r:gk}) is
satisfied: The only points $u\in \left[  0,4\right]$ such that $g(u)\geq
g(\alpha)$ are $0,4$. For $u=0$ we see that $g^{\prime}(0)=-24$. For every
$v\in \left[  0,4\right[  $ the right-hand side of (5) is $0$, while the
left-hand side is nonnegative. For $v=4$, $\kappa(4)=1$ so again (\ref{r:gk})
holds. Likewise we can check $u=4$.

Finally, note that $g$ is not quasiconvex since $g(1)=g(3)=-9$ while $g(2)=-8$.
\end{example}

The mistake in the proof lies at the point where equation (\ref{r:gk}) is used 
at a point $u$ (denoted $y^{\ast}$ in \cite{VNC-2}) that does not satisfy
$g(u) \geq g(a)$, so \eqref{r:gk} does not necessarily hold.

In this note, we present new necessary and sufficient conditions for (not
necessarily smooth) strongly quasiconvex functions. These conditions imply the
first order conditions that are based on (\ref{intro}). Furthermore, we also
improve the quadratic growth condition for strongly quasiconvex functions.

\section{Preliminaries}\label{sec:02}

We recall the definitions of the Dini derivatives, to be used in our results:
Let $I\subseteq \mathbb{R}$ be an open interval and $h:I\rightarrow \mathbb{R}$
be a function. The upper and lower Dini derivatives of $h$ at the point $s\in
I$ are defined as%
\[
h_{+}^{\prime}(s)=\limsup_{t\rightarrow0_{+}}\frac{h(s+t)-h(s)}{t},\qquad
h_{-}^{\prime}(s)=\liminf_{t\rightarrow0_{+}}\frac{h(s+t)-h(s)}{t}.
\]

If $h$ is defined on an open convex subset $C$ of $X$, then the upper and
lower Dini derivatives of $h$ at $x\in C$, in the direction $a\in X$, are
defined as%
\[
h_{+}^{\prime}(x;a)=\limsup_{t\rightarrow0_{+}}\frac{h(x+ta)-h(x)}{t},\qquad
h_{-}^{\prime}(x;a)=\liminf_{t\rightarrow0_{+}}\frac{h(x+ta)-h(x)}{t}.
\]

We will use a simple version of Saks' theorem on recovering a function
from a Dini derivative \cite[Theorem 9]{HaTh}:

\begin{theorem}
[Saks]\label{Saks}Suppose that $F$ is a continuous function defined on an
interval $I$ of $\mathbb{R}$, and $g$ is a continuous function on $I$. If
$F_{+}^{\prime}(s)\geq g(s)$ at every point $s\in I$, then%
\[
F(b)-F(a)\geq \int_{a}^{b}g(s)ds
\]
for each interval $\left[  a,b\right]  \subseteq I$.
\end{theorem}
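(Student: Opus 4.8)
The plan is to reduce Saks' theorem to a one-dimensional comparison principle and then prove that principle by a compactness/maximum argument. Concretely, the heart of the matter is the following claim: \emph{if $\phi:[a,b]\to\mathbb{R}$ is continuous and $\phi_+'(s)>0$ for every $s\in[a,b)$, then $\phi(s)<\phi(b)$ for every $s\in[a,b)$}; in particular $\phi(a)<\phi(b)$. Everything else will be bookkeeping around this.

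To prove the claim, I would fix $s_0\in[a,b)$ and let $M=\max_{[s_0,b]}\phi$, which exists since $\phi$ is continuous and $[s_0,b]$ is compact. No point $p\in[s_0,b)$ can be a maximizer: if $\phi(p)=M$, then $\phi(p+t)-\phi(p)\le 0$ for all $t$ with $0<t\le b-p$, so
\[
\phi_+'(p)=\limsup_{t\to 0^+}\frac{\phi(p+t)-\phi(p)}{t}\le 0,
\]
contradicting the hypothesis. Hence the maximum is attained only at $b$, i.e.\ $M=\phi(b)$, and since $s_0\in[s_0,b)$ is not a maximizer we get $\phi(s_0)<M=\phi(b)$. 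As $s_0$ was arbitrary, the claim follows.

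Now fix $[a,b]\subseteq I$ and $\varepsilon>0$, and apply the claim to the auxiliary function
\[
\phi(s)=F(s)-\int_a^s g(\tau)\,d\tau+\varepsilon(s-a),\qquad s\in[a,b],
\]
which is continuous on $[a,b]$ with $\phi(a)=F(a)$. For $s\in[a,b)$ one has
\[
\frac{\phi(s+t)-\phi(s)}{t}=\frac{F(s+t)-F(s)}{t}-\frac{1}{t}\int_s^{s+t} g(\tau)\,d\tau+\varepsilon .
\]
Since $g$ is continuous, $\tfrac1t\int_s^{s+t}g(\tau)\,d\tau\to g(s)$ as $t\to 0^+$, and a convergent summand may be pulled out of a $\limsup$, so $\phi_+'(s)=F_+'(s)-g(s)+\varepsilon\ge\varepsilon>0$ by the hypothesis $F_+'\ge g$. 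The claim gives $\phi(b)>\phi(a)=F(a)$, i.e.\ $F(b)-F(a)>\int_a^b g(\tau)\,d\tau-\varepsilon(b-a)$, and letting $\varepsilon\downarrow 0$ yields $F(b)-F(a)\ge\int_a^b g(\tau)\,d\tau$, as required.

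The only genuinely delicate step is the claim, and there the care lies entirely in the one-sided limit: the pointwise inequality $\phi(p+t)\le\phi(p)$ for small $t>0$ must be correctly converted into $\phi_+'(p)\le 0$. The other point worth flagging is that the strict inequality "$>0$" in the claim is essential (a merely non-strict hypothesis would allow $\phi$ to decrease), which is exactly why the $\varepsilon(s-a)$ perturbation is inserted and then removed in the limit; continuity of $g$ is used only to differentiate $s\mapsto\int_a^s g$.
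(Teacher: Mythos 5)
Your proof is correct. Note, however, that the paper does not prove this statement at all: it is quoted as a known result with a reference to Hagood and Thomson (\cite{HaTh}), where the full theorem is established for much weaker hypotheses on $g$ (integrability rather than continuity) via a more elaborate argument. What you have supplied is a self-contained, elementary proof of exactly the ``simple version'' the paper needs, and every step checks out: the maximum-principle claim is sound (at a maximizer $p<b$ of $\phi$ on $[s_0,b]$ all forward difference quotients are $\leq 0$, so the $\limsup$ is $\leq 0$, forcing the maximum to be attained only at $b$ and hence $\phi(s_0)<\phi(b)$); the computation $\phi_{+}'(s)=F_{+}'(s)-g(s)+\varepsilon$ is legitimate because the subtracted term converges (this is where continuity of $g$ enters, and the case $F_{+}'(s)=+\infty$ causes no harm); and the $\varepsilon$-perturbation correctly converts the non-strict hypothesis into the strict one needed for the claim before being removed in the limit. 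Your closing remarks correctly identify the two delicate points (the one-sided nature of the Dini derivative at the maximizer, and the necessity of the strict inequality in the claim). The only thing your argument does not deliver is the generality of the cited Theorem 9 of \cite{HaTh}, but that generality is not used in the paper.
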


%The note is organized as follows: In Section \ref{sec:03} we present the new characterizations and its consequences.

We also recall that
%\begin{definition}\textrm{(see \cite{P})} 
a function $h$ defined on a convex set $C\subseteq X$ is called strongly 
quasiconvex with modulus $\gamma>0$ (see \cite{P}), if for every $x,y\in
C$ and $t \in [0, 1]$, we have
\begin{equation}
h(x+t(y-x))\leq \max \{h(x),h(y)\}-\frac{\gamma}{2}t(1-t)\lVert y-x\rVert^{2}.
\label{r:str-qcx}%
\end{equation}
%\end{definition}

In what follows, we will often write strong quasiconvexity in the following
equivalent manner: For every $x,y\in C$ and $z\in \lbrack x,y]$, we have%
\begin{equation}
h(x)\leq h(y)\, \Longrightarrow \,h(y)\geq h(z)+\frac{\gamma}{2}\lVert
z-x\rVert \lVert y-z\rVert. \label{r:new-form}%
\end{equation}

\section{Main Results} \label{sec:03}

%In the differentiable case, the following characterization is due to Arrow-Enthoven \cite{AE}: Let $C \subseteq \mathbb{R}^{n}$ be an open convex set and $h: C \rightarrow \mathbb{R}$ be a differentiable function. Then $f$ is %quasiconvex if and only if for every $x, y \in C$, we have
%\begin{equation}\label{char:AE}
%h(x) \leq h(y) ~ \Longrightarrow ~ \langle \nabla h(y), x - y \rangle \leq 0.
%\end{equation}

Our main result, which provides necessary and sufficient conditions for strong
quasiconvexity, is given below.

\begin{theorem}\label{Th:Charact1} 
 Let $h$ be defined on a convex set $C\subseteq X$. If $h$ is strongly 
 quasiconvex with modulus $\gamma>0$, then for every $x, y \in C$
and every $z=x+t(y-x)$ with $0<t\leq1$, the following implication holds:
\begin{align}
h(x) \leq h(z) \, \Longrightarrow \, h(z)  &  \leq h(y)-\frac{\gamma}{4}(1-
t^{2}) \lVert y-x \rVert^{2}\label{no-integral}\\
&  = h(y)-\frac{\gamma}{4}(\lVert y-x\rVert^{2}-\lVert z-x\rVert^{2}).
\label{no-integral1}%
\end{align}
Conversely, if $h$ is continuous along line segments of $C$, and
\eqref{no-integral} holds for every $x,y\in C$ and every $z=x+t(y-x)$, with
$0<t\leq1$, then $h$ is strongly quasiconvex with modulus $\frac{\gamma}{2}$.
\end{theorem}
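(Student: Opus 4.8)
The plan is to fix a segment $[x,y]\subseteq C$ and pass to the one–variable function $\phi(s)=h(x+s(y-x))$ on $[0,1]$, which inherits the relevant property with scaled modulus $\mu:=\gamma\lVert y-x\rVert^{2}$: for strong quasiconvexity this is the usual restriction, and for \eqref{no-integral} one checks, writing $\zeta$ for the parameter of $z$ and $a$ for that of the base point, that $(1-t^{2})\lVert y-x\rVert^{2}$ becomes $(b-a)^{2}-(\zeta-a)^{2}$ at the $\phi$–level. Since $\lVert z-x\rVert^{2}=t^{2}\lVert y-x\rVert^{2}$, the right–hand sides of \eqref{no-integral} and \eqref{no-integral1} are literally equal; the cases $x=y$ and $t=1$ are trivial, so I take $x\neq y$, $t\in(0,1)$. \emph{For necessity}, $\phi$ is strongly quasiconvex with modulus $\mu$ and $\phi(0)\le\phi(t)$. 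Step 1: $\phi(s)\ge\phi(t)$ on $[t,1]$ — otherwise \eqref{r:str-qcx} applied to $0<t<s_{0}$ with $\phi(s_{0})<\phi(t)$ gives $\phi(t)\le\max\{\phi(0),\phi(s_{0})\}-\tfrac{\mu}{2}t(s_{0}-t)<\phi(t)$; in particular $\phi(0)\le\phi(1)$. Step 2: for $t\le s_{0}<s\le1$, apply \eqref{r:str-qcx} to $0<s_{0}<s$; as $\phi(s)\ge\phi(0)$ the maximum is $\phi(s)$, so $\phi(s)\ge\phi(s_{0})+\tfrac{\mu}{2}s_{0}(s-s_{0})$. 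Step 3: for any partition $t=s_{0}<\dots<s_{n}=1$ sum Step 2 over consecutive nodes, getting $\phi(1)-\phi(t)\ge\tfrac{\mu}{2}\sum_{i}s_{i}(s_{i+1}-s_{i})$, and let the mesh tend to $0$; the right side is a left Riemann sum converging to $\tfrac{\mu}{2}\int_{t}^{1}s\,ds=\tfrac{\mu}{4}(1-t^{2})$, so $\phi(t)\le\phi(1)-\tfrac{\mu}{4}(1-t^{2})$. The conceptual point worth flagging: a \emph{single} use of strong quasiconvexity only yields the weaker $\tfrac{\mu}{2}t(1-t)$, so one genuinely needs the integral of infinitely many infinitesimal increments. (Saks' Theorem~\ref{Saks} could replace the telescoping on $[t,1]$ if $\phi$ were known continuous, but strongly quasiconvex functions need not be continuous, so the bare–hands summation is cleaner.)

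\emph{For sufficiency}, $\phi$ is continuous on $[0,1]$ and satisfies, for $a,b\in[0,1]$ and $\zeta$ strictly between them, $\phi(a)\le\phi(\zeta)\ \Longrightarrow\ \phi(\zeta)\le\phi(b)-\tfrac{\mu}{4}\bigl[(b-a)^{2}-(\zeta-a)^{2}\bigr]$ (together with its mirror interchanging the two endpoints). It suffices to prove $\phi(r)\le\max\{\phi(0),\phi(1)\}-\tfrac{\mu}{4}r(1-r)$ for every $r\in[0,1]$, because doing this on every segment is precisely strong quasiconvexity with modulus $\gamma/2$. The implication at once forces $\phi$ quasiconvex (a strict violator of quasiconvexity would fire it and contradict it) and without any interval of constancy (midpoint test), hence $\phi$ is strictly decreasing then strictly increasing, with unique minimizer $m$. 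If $\phi(r)\ge\min\{\phi(0),\phi(1)\}$, apply the implication with $a$ the endpoint of smaller value, $\zeta=r$, $b$ the other endpoint: it fires, and since both $1-r^{2}$ and $r(2-r)$ dominate $r(1-r)$ we get $\phi(r)\le\max\{\phi(0),\phi(1)\}-\tfrac{\mu}{4}r(1-r)$.

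The remaining — and main — case is $\phi(r)<\min\{\phi(0),\phi(1)\}$. Put $v:=\phi(r)$; by strict monotonicity $\{\phi\le v\}=[c_{-},c_{+}]$ with $0<c_{-}\le r\le c_{+}<1$, $\phi(c_{\pm})=v$, and $r$ is an endpoint of this interval. If $c_{-}=c_{+}$ then $r=m$: letting $\zeta\to m^{\pm}$ in the implication and using continuity gives $v\le\phi(1)-\tfrac{\mu}{4}(1-m)^{2}$ and $v\le\phi(0)-\tfrac{\mu}{4}m^{2}$, hence $v\le\max\{\phi(0),\phi(1)\}-\tfrac{\mu}{4}\max\{m^{2},(1-m)^{2}\}\le\max\{\phi(0),\phi(1)\}-\tfrac{\mu}{4}m(1-m)$. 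Otherwise $c_{-}<c_{+}$; by the symmetry $s\mapsto 1-s$ of the goal assume $r=c_{-}$ and write $c:=c_{+}\in(r,1)$. If $c\ge\tfrac12$, the mirrored implication with $a=c$, $\zeta=r$, $b=0$ fires (as $\phi(c)=\phi(r)=v$) and gives $\phi(r)\le\phi(0)-\tfrac{\mu}{4}r(2c-r)$, which closes the case since $2c-r\ge1-r$. If $c<\tfrac12$, the implication with $a=r$, $\zeta=c$, $b=1$ fires and gives $\phi(r)\le\phi(1)-\tfrac{\mu}{4}(1-c)(1+c-2r)$, and $1+c-2r\ge1-c$ together with $(1-c)^{2}>\tfrac14\ge r(1-r)$ closes it. This exhausts all cases, so $\phi$, and hence $h$, is strongly quasiconvex with modulus $\gamma/2$. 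The obstacle is precisely this ``valley'' case: \eqref{no-integral} is vacuous on any triple whose middle point carries the least value, so one must first locate the sublevel set $\{\phi\le\phi(r)\}$ and then decide from which end to push according to whether its far endpoint lies below or above $\tfrac12$; the factor–$2$ loss in the modulus is exactly what makes the two elementary estimates $2c-r\ge 1-r$ and $(1-c)^2>\tfrac14$ suffice.
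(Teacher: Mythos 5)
Your proof is correct. The necessity half is essentially the paper's argument: you establish monotonicity of $h$ along $[z,y]$, telescope the strong quasiconvexity inequality over a partition, and recognize the resulting sum as a Riemann sum for $\int_t^1 s\,ds$; the paper does exactly this with uniform partitions of $[z,y]$. The sufficiency half, however, takes a genuinely different route. The paper also reduces to the "valley" case $h(z)<h(x)$, but it then picks the point $z'$ on the opposite side of the minimizer with $h(z')=h(z)$, applies \eqref{no-integral1} \emph{twice} (once toward $y$, once toward $x$), adds the two inequalities, and extracts $\tfrac{\gamma}{4}\lVert y-z\rVert\lVert z-x\rVert$ by completing a square. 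You instead identify the sublevel set $\{\phi\le\phi(r)\}=[c_-,c_+]$ and use a \emph{single} application of the implication, choosing its direction according to whether the far endpoint of that interval lies beyond the midpoint, closing each branch with the elementary bounds $2c-r\ge 1-r$ and $(1-c)^2>\tfrac14\ge r(1-r)$. Your version costs a little more case analysis (and you must first prove quasiconvexity and strict unimodality explicitly, which the paper leaves implicit when it writes $\{w\}=\mathrm{argmin}_{[x,y]}h$), but it buys transparency about \emph{why} the modulus degrades by the factor $2$, and it cleanly covers the degenerate situation where $z$ is itself the minimizer (your limit $\zeta\to m^{\pm}$), a case the paper's two-sided construction passes over since there $z'=z$ corresponds to the excluded parameter $t=0$ in \eqref{no-integral}. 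Both arguments use continuity only through the intermediate value theorem and the existence of a minimizer on the segment, consistent with the paper's remark that the forward implication needs no continuity at all.
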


\begin{proof}
$(\Rightarrow)$: Let $h$ be strongly quasiconvex with modulus $\gamma>0$. 
Let $x, y \in C$ and assume that for some $z =x+t(y-x)$, $0<t\leq1$, we have
$h(x) \leq h(z)$. Consider a finite sequence of points $w_{i}=z+\frac{i}{n} 
(y-z)$, $i=0,1,\ldots n$. Since $z\in \left[  x,w_{1}\right]$,
(\ref{r:str-qcx}) implies that we cannot have $\max \left \{  h(x),h(w_{1}%
)\right \}  =h(x)$, thus $h(z)\leq h(w_{1})$. Using the same argument
successively for $w_{i}\in \left[w_{i-1},w_{i+1}\right]$, $i=1,\ldots, n-1$,
we find
\[
h(x)\leq h(z)=h(w_{0})\leq h(w_{1})\leq \ldots \leq h(w_{n})=h(y).
\]
Note that
\begin{equation}
\lVert w_{i}-x\rVert=\lVert z-x\rVert+\frac{i}{n}\lVert y-z\rVert
,~\forall~i\in \{0,1,\ldots,n\}, \label{wix}%
\end{equation}
because all points are on a straight line. Since $w_{i}\in \lbrack x,w_{i+1}]$,
from \eqref{wix} and \eqref{r:new-form} we deduce for $i=0,\ldots,n-1$ that
\begin{align*}
h(w_{1})  &  \geq h(w_{0})+\frac{\gamma}{2}\lVert w_{1}-w_{0}\rVert \lVert
w_{0}-x\rVert \\
&  =h(w_{0})+\frac{\gamma}{2}\frac{1}{n}\lVert y-z\rVert \left(  \lVert
z-x\rVert+\frac{0}{n}\lVert y-z\rVert \right) \\
h(w_{2})  &  \geq h(w_{1})+\frac{\gamma}{2}\lVert w_{2}-w_{1}\rVert \lVert
w_{1}-x\rVert \\
&  =h(w_{1})+\frac{\gamma}{2}\frac{1}{n}\lVert y-z\rVert \left(  \lVert
z-x\rVert+\frac{1}{n}\lVert y-z\rVert \right) \\
&  \vdots \\
h(w_{n})  &  \geq h(w_{n-1})+\frac{\gamma}{2}\lVert w_{n}-w_{n-1}\rVert \lVert
w_{n-1}-x\rVert \\
&  =h(w_{n-1})+\frac{\gamma}{2}\frac{1}{n}\lVert y-z\rVert \left(  \lVert
z-x\rVert+\frac{n-1}{n}\lVert y-z\rVert \right)  .
\end{align*}
By adding the previous inequalities, we have
\[
h(y)\geq h(z)+\frac{\gamma}{2}\lVert y-z\rVert{\displaystyle \, \sum
\limits_{i=0}^{i=n-1}}\, \frac{1}{n}\left(  \lVert z-x\rVert+\frac{i}{n}\lVert
y-z\rVert \right)  .
\]
Taking $n\rightarrow+\infty$, the sum of the right-hand side becomes a Riemann
integral
\begin{align*}
h(y)  &  \geq h(z)+\frac{\gamma}{2}\lVert y-z\rVert \int_{0}^{1}(\lVert
z-x\rVert+s\lVert y-z\rVert)ds\\
&  =h(z)+\frac{\gamma}{2}\lVert y-z\rVert \left(  \lVert z-x\rVert+\frac{1}%
{2}\lVert y-z\rVert \right) \\
&  =h(z)+\frac{\gamma}{4}(1-t^{2})\lVert y-x\rVert^{2},
\end{align*}
thus relation \eqref{no-integral} holds.

$(\Leftarrow)$: Assume that \eqref{no-integral} holds, and take $x,y\in C$
and $z=x+t(y-x)$ with $0<t\leq1$. Suppose without loss of generality that
$h(x)\leq h(y)$. Then we consider two cases. 

If $h(x)\leq h(z)$, then relation \eqref{no-integral} gives
\[
h(z)\leq h(y)-\frac{\gamma}{4}(1-t^{2})\lVert y-x\rVert^{2}\leq h(y)-\frac
{\gamma}{2}t(1-t)\lVert y-x\rVert^{2},
\]
i.e., (\ref{r:str-qcx}) holds. 

Now, assume that $h(z)<h(x)$. Since $h$ is
continuous on $[x,y]$, it has a minimum on the segment. Let
$\{w\}=\mathrm{argmin}_{[x,y]}\,h$. Suppose first that $z\in \lbrack w,y]$.
Then, by continuity, there exists $z^{\prime}\in \lbrack x,w]$ such that
$h(z^{\prime})=h(z)$ (see Figure \ref{fig:exist}). \begin{figure}[ptbh]
\centering
\includegraphics[scale=0.90]{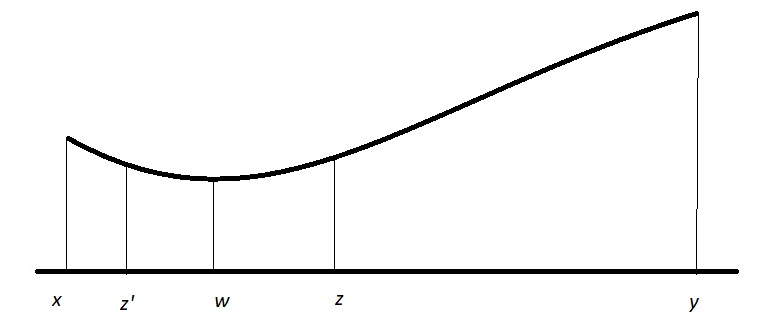}\caption{The points
$x,y,w,z,z^{\prime}$}%
\label{fig:exist}%
\end{figure}

Applying \eqref{no-integral1} to the points $z\in \lbrack z^{\prime},y]$ and
$z^{\prime}\in \lbrack x,z]$, we find
\begin{align*}
h(z)  &  \leq h(y)-\frac{\gamma}{4}(\lVert y-z^{\prime}\rVert^{2}-\lVert
z-z^{\prime}\rVert^{2})\\
h(z)  &  =h(z^{\prime})\leq h(x)-\frac{\gamma}{4}(\lVert z-x\rVert^{2}-\lVert
z-z^{\prime}\rVert^{2}).
\end{align*}
We add and use $\lVert y-z^{\prime}\rVert=\lVert y-z\rVert+\lVert z-z^{\prime
}\rVert$ and $\lVert z-x\rVert=\lVert z-z^{\prime}\rVert+\lVert z^{\prime
}-x\rVert$, which hold because all points are on the same line:
\begin{align*}
2h(z)  &  \leq2h(y)-\frac{\gamma}{4}(\lVert y-z^{\prime}\rVert^{2}+\lVert
z-x\rVert^{2}-2\lVert z-z^{\prime}\rVert^{2})\\
&  =2h(y)-\frac{\gamma}{4}(\lVert y-z\rVert^{2}+2\lVert y-z\rVert \lVert
z-z^{\prime}\rVert+\lVert z^{\prime}-x\rVert^{2}+2\lVert z^{\prime}%
-x\rVert \lVert z-z^{\prime}\rVert).
\end{align*}
The term in the parenthesis equals
\begin{align*}
&  (\lVert y-z\rVert-\lVert z^{\prime}-x\rVert)^{2}+2\lVert y-z\rVert \lVert
z^{\prime}-x\rVert+2\lVert y-z\rVert \lVert z-z^{\prime}\rVert+2\lVert
z^{\prime}-x\rVert \lVert z-z^{\prime}\rVert \\
&  =(\lVert y-z\rVert-\lVert z^{\prime}-x\rVert)^{2}+2\lVert y-z\rVert \lVert
z-x\rVert+2\lVert z^{\prime}-x\rVert \lVert z-z^{\prime}\rVert \\
&  \geq2\lVert y-z\rVert \lVert z-x\rVert.
\end{align*}
Thus,
\begin{equation}
h(z)\leq h(y)-\frac{\gamma}{4}\lVert y-z\rVert \lVert z-x\rVert=h(y)-\frac
{\gamma}{4}t(1-t)\lVert y-x\rVert^{2}. \label{gamma2}%
\end{equation}
Finally, assume that $z\in \lbrack x,w]$. Then we define $z^{\prime}\in \lbrack
w,y]$ such that $h(z^{\prime})=h(z)$ and follow the same steps as before,
interchanging $z$ with $z^{\prime}$, to arrive again at \eqref{gamma2}. It
follows that $h$ is strongly quasiconvex with modulus $\frac{\gamma}{2}$.
\end{proof}

\begin{remark}
Note that the first part of the proof does not require $h$ to be continuous or
even lower semicontinuous.
\end{remark}

As a first consequence, we have the following first-order conditions for
strong quasiconvexity of nonsmooth functions.

\begin{theorem}\label{theo:2}
 Let $h$ be a function defined on an open convex set $C\subseteq X$, 
 continuous on line segments of $C$, and $\gamma>0$. Then the following
 assertions hold:
 \begin{enumerate}
  \item[$(a)$] $h$ satisfies \eqref{no-integral} for every $x,y\in C$ and $z = 
   x + t(y-x)$, $0<t\leq1$, if and only if for every $x,y\in C$ the following 
  implication holds:
  \begin{equation}\label{r:Dini-neces}
   h(x) \leq h(y) ~ \Longrightarrow ~ h_{-}^{\prime} (y;y-x) \geq 
   \frac{\gamma}{2} \left \Vert y-x\right \Vert^{2}. 
  \end{equation}

  \item[$(b)$] If $h$ is strongly quasiconvex with modulus $\gamma>0$, then 
  for every $x, y \in C$, \eqref{r:Dini-neces} holds. Conversely, if 
  \eqref{r:Dini-neces} holds, then $h$ is strongly quasiconvex with modulus 
  $\frac{\gamma}{2}$. 
 \end{enumerate}
\end{theorem}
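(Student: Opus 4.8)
The plan is to prove part~(a) directly and then deduce part~(b) from (a) together with Theorem~\ref{Th:Charact1}. For (b): if $h$ is strongly quasiconvex with modulus $\gamma>0$, then Theorem~\ref{Th:Charact1} gives \eqref{no-integral}, whence \eqref{r:Dini-neces} by (a); conversely, if \eqref{r:Dini-neces} holds, then (a) gives \eqref{no-integral}, and since $h$ is continuous along line segments of $C$, the converse part of Theorem~\ref{Th:Charact1} yields strong quasiconvexity with modulus $\frac{\gamma}{2}$. Thus all the work is in (a); throughout I may assume $x\neq y$, the case $x=y$ being trivial.

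For the implication \eqref{no-integral}$\,\Rightarrow\,$\eqref{r:Dini-neces} in (a), I would fix $x,y\in C$ with $h(x)\le h(y)$ and, using that $C$ is open, set $y_{s}:=y+s(y-x)\in C$ for all small $s>0$, so that $y=x+\frac{1}{1+s}(y_{s}-x)$. Applying \eqref{no-integral} to the triple $x$, $y_{s}$, $z=y$ (its hypothesis $h(x)\le h(y)$ holds) and simplifying with $\lVert y_{s}-x\rVert=(1+s)\lVert y-x\rVert$, one obtains $h(y+s(y-x))-h(y)\ge\frac{\gamma}{4}(2s+s^{2})\lVert y-x\rVert^{2}$; dividing by $s$ and taking the $\liminf$ as $s\to0_{+}$ gives \eqref{r:Dini-neces}. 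This direction is only bookkeeping.

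For the converse \eqref{r:Dini-neces}$\,\Rightarrow\,$\eqref{no-integral}, I would fix $x,y\in C$ and $z=x+t(y-x)$ with $0<t\le1$ and $h(x)\le h(z)$, and put $\gf(s):=h(x+s(y-x))$, which is continuous on an open interval containing $[0,1]$ and satisfies $\gf(0)=h(x)\le h(z)=\gf(t)$. A short difference-quotient computation gives, for $s\in(0,1]$, the identity $\gf_{-}^{\prime}(s)=\frac1s\,h_{-}^{\prime}\bigl(x+s(y-x);\,s(y-x)\bigr)$, so applying \eqref{r:Dini-neces} to $x$ and $x+s(y-x)$ shows
\[
\gf(s)\ge\gf(0)\ \Longrightarrow\ \gf_{-}^{\prime}(s)\ \ge\ \frac{\gamma}{2}\,s\,\lVert y-x\rVert^{2}.
\]
Once it is known that $\gf(s)\ge\gf(0)$ for \emph{every} $s\in[t,1]$, Saks' Theorem~\ref{Saks} on $[t,1]$ (with $g(s)=\frac{\gamma}{2}s\lVert y-x\rVert^{2}$, noting $\gf_{+}^{\prime}\ge\gf_{-}^{\prime}$) gives
\[
h(y)-h(z)=\gf(1)-\gf(t)\ \ge\ \int_{t}^{1}\frac{\gamma}{2}s\lVert y-x\rVert^{2}\,ds=\frac{\gamma}{4}(1-t^{2})\lVert y-x\rVert^{2},
\]
which is \eqref{no-integral}.

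I expect the step ``$\gf\ge\gf(0)$ on $[t,1]$'' to be the main obstacle; it is exactly the point where the argument in \cite{VNC-2} fails, since \eqref{r:Dini-neces} gives no information at points where $h<h(x)$. The remedy I would use exploits the \emph{strict} positivity of the bound: if $s^{*}:=\inf\{s\in[t,1]:\gf(s)<\gf(0)\}$ were finite, continuity of $\gf$ together with $\gf(t)\ge\gf(0)$ would force $\gf(s^{*})=\gf(0)$, so the displayed implication applies at $s^{*}$ and gives $\gf_{-}^{\prime}(s^{*})\ge\frac{\gamma}{2}s^{*}\lVert y-x\rVert^{2}>0$; but then $\gf(s^{*}+r)>\gf(s^{*})=\gf(0)$ for all sufficiently small $r>0$, contradicting the definition of $s^{*}$. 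Hence no such $s^{*}$ exists, $\gf\ge\gf(0)$ on $[t,1]$, and the proof is complete. The remaining items---the difference-quotient identity for $\gf_{-}^{\prime}$ and the algebra in the forward direction---are routine.
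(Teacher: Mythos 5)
Your proposal is correct, and its overall architecture coincides with the paper's: part~(b) is deduced from part~(a) together with Theorem~\ref{Th:Charact1}; the forward direction of (a) is the same perturbation $y_s=y+s(y-x)$ with \eqref{no-integral} applied to the triple $x,y_s,y$; and the converse of (a) reduces to the one-variable function $\varphi(s)=h(x+s(y-x))$ and integrates the lower Dini derivative via Saks' Theorem~\ref{Saks}. The one place where you genuinely diverge is the crux step --- establishing that $\varphi(s)\ge\varphi(0)$ for all $s\in[t,1]$, which is exactly where the argument of \cite{VNC-2} broke down. The paper first proves that \eqref{r:Dini-neces} forces $h$ to be \emph{strictly quasiconvex} on all of $C$ (by contradicting an interior maximizer of $h$ on a segment), and then reads off $h(x_s)\ge h(x)$ from strict quasiconvexity since $z\in\,]x,x_s]$. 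You instead run a local first-crossing argument: if $A=\{s\in[t,1]:\varphi(s)<\varphi(0)\}$ were nonempty, continuity forces $\varphi(s^*)=\varphi(0)$ at $s^*=\inf A$ (note $s^*<1$, since $A$ is relatively open), the hypothesis of \eqref{r:Dini-neces} then applies at $x_{s^*}$ and yields $\varphi_-'(s^*)>0$, hence $\varphi>\varphi(0)$ just to the right of $s^*$, contradicting the definition of $s^*$. Both mechanisms are sound; yours is slightly more self-contained and localized to the segment at hand, while the paper's extracts strict quasiconvexity as a reusable intermediate fact. One cosmetic point: you should phrase the contradiction hypothesis as ``if the set $A$ is nonempty'' rather than ``if $s^*$ is finite'', since the infimum of a nonempty subset of $[t,1]$ is automatically finite.
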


\begin{proof}
 $(a)$ Assume that $h$ satisfies (\ref{no-integral}) and let $x,y\in C$ be such
 that $h(x)\leq h(y)$. Take $t>0$ small enough so that $y_{t}:=y+t(y-x)\in C$.
 Using (\ref{no-integral1}) for $y\in \left[  x,y_{t}\right] $ we find
 \begin{align*}
  h(y) & \leq h(y_{t})-\frac{\gamma}{4} \left( \lVert y_{t} - x \rVert^{2} - 
  \lVert y-x \rVert^{2} \right) \\
  \Longrightarrow ~ h(y) &  \leq h(y_{t}) - \frac{\gamma}{4} ((1+t)^{2} - 1)  
  \left \Vert y-x\right \Vert ^{2} \\
  \Longrightarrow ~ \frac{h(y + t(y-x)) - h(y)}{t} &  \geq \frac{\gamma}{4} 
  (t+2) \lVert y-x \rVert ^{2}.
 \end{align*}

Taking the $\liminf$ as $t\rightarrow0_{+}$ we obtain (\ref{r:Dini-neces}).
	
Conversely, assume that (\ref{r:Dini-neces}) holds. We first show that $h$ is 
strictly quasiconvex. Indeed, assume that it is not. Then there exist $a, b 
\in C$ and $d \in \, ]a, b[$ such that $h(d) \geq \max\{h(a), h(b)\}$. Thus, 
by continuity we can find $c \in \, ]a, b[$ such that $c \in {\rm argmax}_{
[a, b]}\,h$. Since $h(c) \geq h(a)$, \eqref{r:Dini-neces} implies that 
$h^{\prime} (c; c-a)>0$. By the definition of the Dini derivative, for $t>0$
sufficiently  small we have $h(c+t(c-a))>h(c)$ and $c+t(c-a) \in [a, b]$. This 
contradicts the fact that $c \in {\rm argmax}_{[a, b]}\,h$. Thus,  $h$ is 
strictly quasiconvex.
	
	Now, let $x,y\in C$ and $z=x+t(y-x)$ with $0<t\leq1$ be such that $h(x)\leq
	h(z)$. Take any $s\in \lbrack t,1]$ and set $x_{s}=x+s(y-x)$, $g(s)=h(x_{s})$.
	Then {$z\in \,]x,x_{s}]$, }so by strict quasiconvexity, $h(x)\leq h(x_{s})$.
	Using (\ref{r:Dini-neces}) we find
	\begin{equation}
		h_{+}^{\prime}(x_{s};x_{s}-x)\geq h_{-}^{\prime}(x_{s};x_{s}-x)\geq
		\frac{\gamma}{2}\lVert x_{s}-x\rVert^{2}.\label{r:h+ineq}%
	\end{equation}
	Note that $x_{s}-x=s(y-x)$ and
	\[
	h_{+}^{\prime}(x_{s};x_{s}-x)=sh_{+}^{\prime}(x_{s};y-x)=sg_{+}^{\prime}(s).
	\]

	Thus, (\ref{r:h+ineq}) implies
	\[
	g_{+}^{\prime}(s)\geq \frac{\gamma}{2}s\lVert y-x\rVert^{2}.
	\]
	{Then, by using Theorem \ref{Saks}, }
	\begin{align*}
		g(1)-g(t) &  \geq \int_{t}^{1}\frac{\gamma}{2}s\lVert y-x\rVert^{2}ds\\
		\Longrightarrow~h(y)-h(x+t(y-x)) &  \geq \frac{\gamma}{4}\lVert y-x\rVert
		^{2}(1-t^{2}).
	\end{align*}

	Hence, $h$ satisfies \eqref{no-integral}. 
	
 $(b)$ This is an immediate consequence of part $(a)$ and Theorem  
 \ref{Th:Charact1}.
\end{proof}

\begin{remark}
 Note that in both parts of the above theorem, the continuity assumption is 
 used only for the converse.
\end{remark}

In case of smooth functions, part $(b)$ of the above theorem gives a result 
that revisits \cite[Theorems 1 and 6]{VNC-2}.

\begin{corollary}\label{Gateaux} 
 Let $h$ be G\^{a}teaux differentiable on an open convex set $C \subseteq X$. 
 If $h$ is strongly quasiconvex with modulus $\gamma>0$, then for every 
 $x, y \in C$ the following implication holds:
 \begin{equation}
  h(x) \leq h(y) ~ \Longrightarrow ~ \langle \nabla h(y), y-x \rangle \geq
  \frac{\gamma}{2} \left \Vert y-x\right \Vert ^{2}. \label{basic'}%
 \end{equation}
 Conversely, if \eqref{basic'} holds, then $h$ is strongly quasiconvex with
 modulus $\frac{\gamma}{2}$.
\end{corollary}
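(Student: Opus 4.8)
The plan is to obtain Corollary \ref{Gateaux} as an immediate specialization of Theorem \ref{theo:2}(b); essentially all that has to be done is to translate the Dini-derivative condition \eqref{r:Dini-neces} into the gradient condition \eqref{basic'} when $h$ is G\^{a}teaux differentiable, and to check that the standing continuity hypothesis of Theorem \ref{theo:2} is automatically satisfied.

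First I would note that a G\^{a}teaux differentiable $h$ is continuous along line segments of $C$: for fixed $x\in C$ and direction $a$, the one-variable function $t\mapsto h(x+ta)$ is differentiable wherever $x+ta\in C$, hence continuous there, so $h$ restricted to any segment of $C$ is continuous. Thus the hypotheses of Theorem \ref{theo:2} hold. Next, for any $x,y\in C$, G\^{a}teaux differentiability at $y$ gives
\[
h_{-}^{\prime}(y;y-x)=h_{+}^{\prime}(y;y-x)=\lim_{t\rightarrow0_{+}}\frac{h(y+t(y-x))-h(y)}{t}=\langle \nabla h(y),y-x\rangle,
\]
so for differentiable $h$ the implication \eqref{r:Dini-neces} is literally the same statement as \eqref{basic'}.

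Consequently both directions follow at once from Theorem \ref{theo:2}(b): if $h$ is strongly quasiconvex with modulus $\gamma>0$, then \eqref{r:Dini-neces} holds, which is exactly \eqref{basic'}; conversely, if \eqref{basic'} holds, then \eqref{r:Dini-neces} holds, and Theorem \ref{theo:2}(b) yields that $h$ is strongly quasiconvex with modulus $\frac{\gamma}{2}$. I do not expect any genuine obstacle in this argument; the only point deserving an explicit line is the continuity-along-segments claim, which is immediate from one-dimensional differentiability of $t\mapsto h(x+ta)$.
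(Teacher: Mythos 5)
Your proposal is correct and is exactly the argument the paper intends (the corollary is stated as an immediate consequence of Theorem \ref{theo:2}(b) without a written proof): for a G\^{a}teaux differentiable function the Dini derivatives $h_{\pm}^{\prime}(y;y-x)$ coincide with $\langle \nabla h(y),y-x\rangle$, and continuity along segments follows from differentiability of $t\mapsto h(x+ta)$. Nothing is missing.
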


Another consequence of Theorem \ref{Th:Charact1} is the following 
improvement of the quadratic growth property for the unique minimizer of 
strongly quasiconvex functions. The same result was obtained in 
\cite{NamSha} for lower semicontinuous functions.

\begin{corollary}\label{strong_min} 
 Let $h$ be defined on a convex set $C \subseteq X$. If $h$ is strongly 
 quasiconvex with modulus $\gamma>0$ and $\overline{x} \in {\rm argmin}_{C}\,h$, then
 \begin{equation}
  h(\overline{x}) + \frac{\gamma}{4} \lVert y - \overline{x} \rVert^{2} \leq
  h(y), ~ \forall~ y \in C. \label{qgp:improve}%
 \end{equation}
\end{corollary}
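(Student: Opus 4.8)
The plan is to derive \eqref{qgp:improve} directly from the necessary direction of Theorem~\ref{Th:Charact1}, applied with the minimizer $\overline{x}$ playing the role of the point whose function value is smallest. Fix $y\in C$ with $y\neq\overline{x}$ (the case $y=\overline{x}$ being trivial). The first step is to choose the right triple $(x,z,y)$ in the hypothesis of the implication \eqref{no-integral}. Since $\overline{x}$ is a global minimizer on $C$, the natural choice is to take the segment endpoints to be $\overline{x}$ and $y$ and to let the intermediate point be $\overline{x}$ itself; that is, apply Theorem~\ref{Th:Charact1} with ``$x$''$=\overline{x}$, ``$y$''$=y$, and ``$z$''$=\overline{x}=\overline{x}+t(y-\overline{x})$ for $t$ approaching $0$. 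The antecedent $h(x)\leq h(z)$ becomes $h(\overline{x})\leq h(\overline{x})$, which holds trivially.

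The second step is to read off the conclusion and pass to the limit. For $t\in(0,1]$ set $z_t=\overline{x}+t(y-\overline{x})$. Since $h(\overline{x})\leq h(z_t)$ (as $\overline{x}$ minimizes $h$ on $C$), relation \eqref{no-integral} gives
\[
h(z_t)\leq h(y)-\frac{\gamma}{4}(1-t^2)\lVert y-\overline{x}\rVert^{2}.
\]
Now I would like to let $t\to 0_+$. The subtlety is that $h$ need not be continuous, so I cannot simply say $h(z_t)\to h(\overline{x})$. However, I only need the inequality in one direction: because $\overline{x}$ is a minimizer, $h(z_t)\geq h(\overline{x})$ for every $t$, hence
\[
h(\overline{x})\leq h(z_t)\leq h(y)-\frac{\gamma}{4}(1-t^2)\lVert y-\overline{x}\rVert^{2}
\]
for all $t\in(0,1]$. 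Taking the supremum of the right-hand side over $t\in(0,1]$ — equivalently, letting $t\to 0_+$ — yields
\[
h(\overline{x})\leq h(y)-\frac{\gamma}{4}\lVert y-\overline{x}\rVert^{2},
\]
which is exactly \eqref{qgp:improve}.

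The only genuine obstacle is the lack of continuity, and the remark after Theorem~\ref{Th:Charact1} explicitly notes that the necessary direction does not require continuity or even lower semicontinuity, so the argument above is legitimate. Everything else is a one-line substitution into \eqref{no-integral} together with the defining property of a minimizer, so no further machinery is needed.
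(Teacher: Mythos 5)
Your argument is correct and is essentially identical to the paper's own proof: both apply the necessary direction of Theorem~\ref{Th:Charact1} with $x=\overline{x}$ and $z=\overline{x}+t(y-\overline{x})$, use $h(\overline{x})\leq h(z)$ (which holds since $\overline{x}$ is the minimizer) to obtain $h(\overline{x})\leq h(y)-\frac{\gamma}{4}(1-t^{2})\lVert y-\overline{x}\rVert^{2}$, and let $t\rightarrow 0^{+}$. Your explicit observation that no continuity is needed because the minimizer property supplies the lower bound $h(\overline{x})\leq h(z_t)$ is a nice clarification of a point the paper leaves implicit.
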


\begin{proof}
Let $0<t\leq1$. By applying \eqref{no-integral}, we have
\[
h(\overline{x})\leq h(\overline{x}+t(y-\overline{x}))\leq h(y)-\frac{\gamma
}{4}\left(  1-t^{2}\right)  \lVert y-\overline{x}\rVert^{2}.
\]
and the result simply follows by taking $t\rightarrow0^{+}$.
\end{proof}

\medskip

\noindent \textbf{Acknowledgements} This research was partially supported 
by ANID--Chile under project Fondecyt Regular 1241040 (Lara). Also, part of 
the work was completed while the authors visited the Vietnam Institute for
Advanced study in Mathematics (VIASM), in Hanoi, Vietnam, during March and April 2025. The authors would like to thank the Institute for the hospitality.


\begin{thebibliography}{99}                                                                                               %
%\bibitem {Crouzeix2005}Crouzeix, J.-P., Criteria for generalized convexity %and
%generalized monotonicity in the differentiable case, in: Handbook of
%Generalized Convexity and Generalized Monotonicity, N. Hadjisavvas, S.
%Koml\'{o}si and S. Schaible (eds), Springer (2005).


\bibitem{ADSZ}
 {\sc M. Avriel, W.E. Diewert, S. Schaible, I. Zang}. ``Generalized Concavity". SIAM, Classics in Applied Mathematics, Philadelphia, (2010). 


\bibitem {AE}\textsc{K.J. Arrow, A.C. Enthoven}, Quasiconcave programming,
\textit{ Econo\-me\-tri\-ca}, \textbf{29}, 779--800, (1961).

\bibitem {CamMar}\textsc{A. Cambini, L. Martein}. ``Generalized Convexity and
Optimization". Lecture Notes in Economics and Mathematical Systems no. 616,
Springer, (2009).

\bibitem {HKS}
 \textsc{N. Hadjisavvas, S. Komlosi, S. Schaible}. ``Handbook of
 Ge\-ne\-ra\-li\-zed Convexity and Generalized Monotonicity''. Springer-Verlag,
 Boston, (2005).

\bibitem {HLMV}\textsc{N. Hadjisavvas, F. Lara, R.T. Marcavillaca, P.T.
Vuong}, Heavy ball and Nesterov accelerations with Hessian-driven damping for
nonconvex optimization, arXiv: 2506.15632, (2025).

\bibitem {HaTh}\textsc{J.W. Hagood, B. S. Thomson,} Recovering a Function from
a Dini Derivative, Amer. Math. Monthly, \textbf{113}, 34--46, (2006).

\bibitem {J-1}\textsc{M. Jovanovi\'c}, Strongly quasiconvex quadratic
functions, \textit{Publ. Inst. Math., Nouv. S\'er.}, \textbf{53}, 153--156, (1993).

\bibitem {Lara-9}
\textsc{F. Lara}, On strongly quasiconvex functions:
existence results and proximal point algorithms, \textit{J. Optim. Theory
Appl.}, \textbf{192}, 891--911, (2022).

\bibitem {LMV}\textsc{F. Lara, R.T. Marcavillaca, P.T. Vuong},
Characterizations, dy\-na\-mi\-cal systems and gradient methods for strongly
quasiconvex functions, \textit{J. Optim. Theory Appl.}, Vol. 206, article
number 60, (2025).
%DOI: 10.1007/s10957-025-02728-y .


\bibitem {NamSha}\textsc{N.M. Nam, J. Sharkansky}, On strong quasiconvexity of
functions in infinite dimensions, arxiv:2409.17450v1, (2024).

\bibitem {P}\textsc{B.T. Polyak}, Existence theorems and convergence of
minimizing sequences in extremum problems with restrictions, \textit{Soviet
Math.}, \textbf{7}, 72--75, (1966).

\bibitem {VNC-2}\textsc{A.A. Vladimirov, Ju.E. Nesterov, Ju.N. Chekanov}, On
uniformly quasi-convex functionals. (English. Russian original), \textit{Mosc. Univ.
Comput. Math. Cybern.} 1978, no 4, 19-30 (1978); translation from \textit{Vestn. Mosk. Univ.}, Ser. XV 1978, no. 4, 18-27 (1978). 
\end{thebibliography}
\end{document}